\documentclass{amsart}
\usepackage{amsmath, amscd, amssymb, amsthm}
\usepackage{bbm}
\usepackage{latexsym}
\usepackage{amsfonts}
\usepackage{graphicx}
\usepackage{xcolor}
\usepackage{needspace}
\definecolor{revieworange}{RGB}{224,112,0}
\definecolor{reviewred}{RGB}{180,35,35}
\definecolor{reviewblue}{RGB}{0,82,170}

\usepackage[all,cmtip]{xy}
\usepackage[colorlinks,linkcolor=blue,breaklinks=true,urlcolor=blue,citecolor=blue,anchorcolor=blue,pagebackref]{hyperref}%
\usepackage{geometry}
\newtheorem{theorem}{Theorem}
\newtheorem{lemma}{Lemma}

\newtheorem{proposition}{Proposition}

\newtheorem{question}[theorem]{Question}

\newtheorem{conjecture}{Conjecture}

\newcommand{\HSC}{\operatorname{HSC}}
\newcommand{\Scal}{\operatorname{Scal}}
\newcommand{\Sym}{\operatorname{Sym}}
\newcommand{\Gr}{\operatorname{Gr}}

\renewcommand*\backref[1]{}
\renewcommand*\backrefalt[4]{ \ifcase #1 \or (cited on page #2) \else (cited on pages #2) \fi}

\newcommand{\be}{\begin{equation}}
\newcommand{\ee}{\end{equation}}
\newcommand{\bea}{\begin{eqnarray}}
\newcommand{\eea}{\end{eqnarray}}

\def\XXint#1#2#3{{\setbox0=\hbox{$#1{#2#3}{\int}$ }
\vcenter{\hbox{$#2#3$ }}\kern-.6\wd0}}

\newcommand{\revisioncolor}{\color{purple}}
\newcommand{\rev}[1]{\texorpdfstring{{\revisioncolor #1}}{#1}}

\begin{document}

\title[On compact Hermitian surfaces with positive Strominger-Bismut sectional curvature]{On compact Hermitian surfaces with positive Strominger-Bismut sectional curvature}

\author{Qingsong Wang}
\address{Qingsong Wang. Hal{\i}c{\i}o\u{g}lu Data Science Institute, University of California San Diego, La Jolla, CA 92093, USA}
\email{qswang92@gmail.com}
\thanks{Zheng is the corresponding author. He is partially supported by National Natural Science Foundations of China
with the grant No.12471039 and  12141101, and is supported by the 111 Project D21024.}

\author{Shing-Tung Yau}
\address{Shing-Tung Yau. Yau Mathematical Sciences Center, Tsinghua University, Beijing 100084, China}
\email{{styau@mail.tsinghua.edu.cn}}

\author{Fangyang Zheng}
\address{Fangyang Zheng. School of Mathematical Sciences, Chongqing Normal University, Chongqing 401331, China}
\email{20190045@cqnu.edu.cn; franciszheng@yahoo.com}

\subjclass[2020]{53C55 (primary), 53C05 (secondary)}
\keywords{Frankel conjecture, Hermitian manifold, Strominger-Bismut sectional curvature, vanishing theorem, Weitzenb\"ock formula.}

\begin{abstract}
 In an earlier work, Yau and Zheng proposed {a Hermitian analogue of a weak form of the Frankel conjecture}, which states that any compact Hermitian manifold with positive Strominger-Bismut sectional curvature must be biholomorphic to the complex projective space. In this article we confirm the conjecture in complex dimension two. This is achieved by a vanishing theorem for anti-self-dual harmonic 2-forms on such surfaces, utilizing a curvature decomposition formula by Ferreira and a refined Kato inequality.
 \end{abstract}

\maketitle

\tableofcontents

\markleft{Qingsong Wang, Shing-Tung Yau, and Fangyang Zheng}
\markright{On compact Hermitian surfaces with positive Strominger-Bismut sectional curvature}

\section{Introduction}\label{intro}

The positivity issue is a central theme in complex geometry. The famous Frankel conjecture states that any compact K\"ahler manifold with positive bisectional curvature must be biholomorphic to ${\mathbb C}{\mathbb P}^n$. The algebraic geometry version is the slightly more general Hartshorne conjecture, which states that any smooth projective variety with ample tangent bundle over an algebraically closed field of characteristic zero must be the projective space over that field. {The} Hartshorne conjecture was solved by Mori in his celebrated paper \cite{Mori} in 1979. Around the same time, Siu and Yau proved {the} Frankel conjecture \cite{SiuYau} in 1980 {using differential-geometric methods}. Utilizing the techniques developed by Mori, Siu-Yau, as well as Hamilton's revolutionary technique on Ricci flow \cite{Hamilton}, Mok \cite{Mok} successfully proved the generalized Frankel conjecture, which states that any compact K\"ahler manifold with non-negative bisectional curvature must be covered by the product of ${\mathbb C}^k$ (with $k\geq 0$) and a compact Hermitian symmetric space. In comparison, the generalized Hartshorne conjecture is still wide open (except in low dimensions). The reduction theorem by {Demailly}, Peternell, and Schneider \cite{DPS} can split off the flat factor, so the conjecture reduces to the statement that any Fano manifold with nef tangent bundle must be a rational homogeneous variety.

A weak form of {the} Frankel conjecture is to replace bisectional curvature by the slightly stronger sectional curvature:  {\em any compact K\"ahler manifold with positive sectional curvature must be biholomorphic to ${\mathbb C}{\mathbb P}^n$.} This is a true statement by the work of Mori and Siu-Yau, but a natural question would be to wonder what happens if one drops the K\"ahlerness in the assumption. This naive generalization fails: there are compact Hermitian manifolds with positive sectional curvature that are not biholomorphic to ${\mathbb C}{\mathbb P}^n$. For instance, in his famous work \cite{Wallach}, Wallach constructed a family of Hermitian metrics on the flag threefold $M^3={\mathbb P}(T_{{\mathbb C}{\mathbb P}^2})$ which has positive sectional curvature everywhere. Nonetheless, one can ask the question:

\begin{question}
What {kinds} of compact complex manifolds can admit a Hermitian metric with positive sectional curvature?
\end{question}

Presumably such manifolds would be  rare and might be classifiable. But at this point, we do not even know the answer in dimension 2, nor do we know if such a manifold must be K\"ahlerian. Note that if such a manifold is K\"ahlerian, then it must be projective (and rationally connected) by the work of Yang \cite{Yang}). In \cite{YZ}, Yau and Zheng raised the following conjecture:

\begin{conjecture}[Yau-Zheng \cite{YZ}]  \label{conj1}
Any compact  Hermitian manifold with positive Strominger-Bismut sectional curvature must be biholomorphic to ${\mathbb C}{\mathbb P}^n$.
\end{conjecture}

Recall that on a given Hermitian manifold $(M^n,g)$, besides the Levi-Civita (Riemannian) connection $\nabla^r$, there are also the Chern connection $\nabla^c$  and the Strominger-Bismut connection $\nabla^b$ (\cite{Strominger}, \cite{Bismut}) that are canonically associated to the metric. When the metric $g$ is K\"ahler, all three connections coincide, but when $g$ is not K\"ahler, these three connections are mutually distinct. As observed in \cite{YZ}, the Levi-Civita (Riemannian) sectional curvature is always greater than or equal to the Strominger-Bismut sectional curvature, so the hypothesis in Conjecture \ref{conj1} is stronger than positive (Riemannian) sectional curvature which was proven to be insufficient for the conclusion by Wallach's example.

The main purpose of this article is to confirm Conjecture \ref{conj1} in dimension two:

\begin{theorem} \label{thm2}
Any compact  Hermitian surface with positive Strominger-Bismut sectional curvature must be biholomorphic to ${\mathbb C}{\mathbb P}^2$.
\end{theorem}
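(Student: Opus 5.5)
We will show $b^-(M)=0$ and then use the classification of compact complex surfaces to conclude that $M\cong{\mathbb C}{\mathbb P}^2$. The vanishing $b^-=0$ will come from a Bochner argument on anti-self-dual harmonic $2$-forms.

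\textbf{Step 1: topological consequences of the curvature hypothesis.} Positive Strominger-Bismut sectional curvature implies positive Riemannian sectional curvature, by the comparison observed in \cite{YZ}. By Myers' theorem and Synge's theorem (the manifold is orientable and even-dimensional), $M$ is compact and simply connected, so $b_1=0$. For compact complex surfaces with $b_1$ even, $b^+=2h^{2,0}+1$. Hence $M$ is K\"ahlerian and $b^+\ge1$.

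We also need $h^{2,0}=0$ and negative Kodaira dimension. Positivity of the Bismut holomorphic sectional curvature gives positive Bismut (hence, after an averaging argument, Chern-type) total scalar curvature in the Gauduchon gauge. Then $\deg K_M<0$ with respect to a Gauduchon metric, so all plurigenera vanish and $\kappa=-\infty$. Thus $p_g=0$, so $b^+=1$, and $M$ is a simply connected surface of negative Kodaira dimension. By Enriques-Kodaira, $M$ is rational: either ${\mathbb C}{\mathbb P}^2$ or a blow-up of ${\mathbb C}{\mathbb P}^2$ or of a Hirzebruch surface. All of these except ${\mathbb C}{\mathbb P}^2$ have $b^-\ge1$. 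It therefore suffices to prove $b^-=0$.

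\textbf{Step 2: Weitzenb\"ock formula on $\Lambda^-$.} Let $\omega$ be an anti-self-dual harmonic $2$-form. The Riemannian Weitzenb\"ock formula reads $\Delta\omega=\nabla^*\nabla\omega+(\frac{s}{3}-2W^-)\omega$. We will rewrite the curvature term using the Bismut connection. Since $J$ acts trivially on $\Lambda^-$ (as $\Lambda^-\subset\Lambda^{1,1}_0$) and $\nabla^b$ has totally skew torsion, the natural tool is Ferreira's decomposition of the Bismut curvature in dimension four. It expresses $R^b$ in terms of $R^r$ and the torsion (the Lee form $\theta$ and $d^*\theta$, $\nabla\theta$). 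We will write $\langle(\frac{s}{3}-2W^-)\omega,\omega\rangle$ as a sum of Bismut sectional curvatures over orthonormal frames adapted to $\omega$ (at a point $\omega$ is a multiple of $e^1\wedge e^2-e^3\wedge e^4$), plus torsion terms. For a Riemannian metric, $\frac{s}{3}-2W^-$ is positive on $\Lambda^-$ whenever sectional curvature is positive, up to the standard estimate. With Bismut curvature, the leftover torsion terms are quadratic in $\theta$ together with terms of the form $\langle\nabla\theta,\cdot\rangle$ paired with $\omega\otimes\omega$.

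\textbf{Step 3: absorbing torsion (the main obstacle).} We expect the torsion terms not to have a sign pointwise. The plan is to integrate by parts, moving the $\nabla\theta$ terms onto $\nabla\omega$, which yields cross terms $\theta*\omega*\nabla\omega$. These are controlled by Cauchy-Schwarz against $|\nabla\omega|^2$, whose coefficient is improved by the refined Kato inequality for harmonic self-dual or anti-self-dual forms, $|\nabla\omega|^2\ge\frac32|\nabla|\omega||^2$. The $\frac32$ gain, or equivalently working with $|\omega|^{1/2}$ or a conformal change to the Gauduchon metric, should give enough room to absorb the $|\theta|^2|\omega|^2$ terms using strict positivity of the Bismut sectional curvature. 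The resulting integral inequality forces $\omega=0$. Balancing the constants in this absorption is where we expect the real difficulty. If it fails directly, we would first try a conformal rescaling $\tilde g=e^{2f}g$, since $\Lambda^-$-harmonicity is conformally invariant in dimension four, and choose $f$ to kill part of the torsion contribution.

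Finally, $b^-=0$ together with Step 1 gives $M\cong{\mathbb C}{\mathbb P}^2$.
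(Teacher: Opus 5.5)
Your Steps 2--3 follow the paper's strategy: the Weitzenb\"ock formula on $\Lambda^-$, Ferreira's decomposition, Seaman's refined Kato inequality, and a power of the norm. Write $\alpha$ for your anti-self-dual form $\omega$. The trouble is that these steps stop where the proof actually begins, and the one structural claim you make there is false. Positive sectional curvature does \emph{not} make $\frac{s}{3}-2W^-$ positive on $\Lambda^-$. On $\overline{\mathbb{CP}^2}$ (Fubini--Study metric, orientation opposite to the complex one), the K\"ahler form is a parallel anti-self-dual form, so $\frac{s}{3}-2W^-$ has a zero eigenvalue.

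What is missing is the step where the \emph{Bismut} hypothesis enters. Since $\nabla^bJ=0$, each $R^b(X,Y)$ commutes with $J$, so $\langle\mathcal R^b\beta,\gamma\rangle=0$ for every $\gamma\in\Lambda^-_J\subset\Lambda^+$. Take unit $v\in\Lambda^-$ and $\gamma\in\Lambda^-_J$, and test $K^b$ on the unit decomposable forms $\sigma_\pm=(v\pm\gamma)/\sqrt2$. This gives $\langle\mathcal R^bv,v\rangle=K^b(\sigma_+)+K^b(\sigma_-)\ge2\kappa$, hence $Q_b(\alpha)\ge8\kappa|\alpha|^2$.

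You also need the torsion terms exactly, not just their general shape. After symmetrizing Ferreira's $\Lambda^-$-block, the $(d(*H))^-$ term drops out because it acts skew-adjointly. One is left with $\frac{s}{3}|\alpha|^2-2\langle W^-\alpha,\alpha\rangle=Q_b(\alpha)+\bigl(\frac12|\theta|^2-\delta\theta\bigr)|\alpha|^2$. So only $|\theta|^2$ and $\delta\theta$ appear, with exactly these coefficients.

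These coefficients decide the outcome, and your primary plan does not survive them. Take $p=1$, write $\int\delta\theta\,|\alpha|^2=2\int|\alpha|\langle\theta,d|\alpha|\rangle$, and use $|\nabla\alpha|^2\ge\frac32|d|\alpha||^2$. With $a=|d|\alpha||$ and $b=|\theta||\alpha|$ you are left with the quadratic form $\frac32a^2-2ab+\frac12b^2$, which is indefinite. So Cauchy--Schwarz cannot absorb the torsion without an a priori bound of $|\theta|^2$ by $\kappa$. The $|\alpha|^{1/2}$ option is not equivalent to this; it is exactly what has to be checked. For $f=|\alpha|^p$:
\begin{itemize}
\item the refined Kato inequality leaves the gradient coefficient $p(p-\frac12)$, which needs $p\ge\frac12$;
\item completing the square as $|df-pf\theta|^2$ leaves $p(\frac12-p)|\theta|^2f^2$, which needs $p\le\frac12$.
\end{itemize}
Only $p=\frac12$ makes both nonnegative. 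Since $|\alpha|^{1/2}$ is not an admissible test function at the zeros of $\alpha$, one must take $p$ slightly above $\frac12$ with $(p-\frac12)\|\theta\|_\infty^2<8\kappa$, and pass to the limit from $(|\alpha|^2+\varepsilon)^{p/2}$. A conformal change is not a substitute, because positivity of Bismut sectional curvature is not conformally invariant.

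Step 1 is only sketched, but it is repairable; it simply differs from the paper, which gets rationality from Yang's theorem via $\HSC^c\ge\HSC^b>0$. Your Gauduchon-degree route can be completed on a surface as follows. Let $s_c$ be the trace of the first Chern--Ricci form and $\hat s_c$ the other full contraction. Averaging $\HSC^c>0$ over unit spheres gives $s_c+\hat s_c>0$. On a surface, $s_c-\hat s_c$ is a positive multiple of $\delta\theta+|\theta|^2$. If $e^fg$ is Gauduchon, then $\int_Me^f(\delta\theta+|\theta|^2)\,dV_g=\int_Me^f|\theta+df|^2\,dV_g\ge0$. Together these give $\deg K_M<0$ with respect to $e^fg$.
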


Suppose $(M^2,g)$ is \rev{a} compact Hermitian surface with positive Strominger-Bismut sectional curvature. By the monotonicity theorem of Yau and Zheng mentioned before \cite[Theorem 1.14]{YZ}, we know that $g$ also has positive Levi-Civita sectional curvature. {In other words,} it is positively curved as a Riemannian manifold. The complex structure also gives an orientation. {Thus, by Synge's theorem} \cite{Synge}, we know that $M^2$ is {simply connected}. Hence its first betti number $b_1=0$ is even, thus by a well-known fact (see for instance \cite{BHPV} or \cite{Buchdahl}) we know that $M^2$ is K\"ahlerian. On the other hand, on any Hermitian manifold, the Chern holomorphic sectional curvature is no less than the Strominger-Bismut holomorphic sectional {curvature} by \cite[Theorem {1.15}]{YZ}, so $g$ has positive Chern holomorphic sectional curvature. By Yang's result \cite{Yang}, any K\"ahlerian compact Hermitian manifold with positive Chern holomorphic sectional curvature  must be projective and rationally connected. Therefore we know that the surface $M^2$ in Theorem \ref{thm2} must be a rational surface. Hence to prove the theorem, it suffices to show that $b_2(M)=1$, or equivalently, $b^-_2(M)=0$. So it suffices to prove the following:

\begin{theorem} \label{thm3}
Let $(M^2,g)$ be a compact  Hermitian surface with positive Strominger-Bismut sectional curvature. Then every  harmonic anti-self-dual 2-form on $(M^2,g)$ vanishes. Consequently, $b_2^-(M)=0$.
\end{theorem}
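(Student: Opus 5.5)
We argue by a Bochner--Weitzenb\"ock argument for anti-self-dual 2-forms. The curvature term will be expressed through the Strominger--Bismut curvature by Ferreira's decomposition, and the resulting deficit will be absorbed by a refined Kato inequality.

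Let $\omega$ be a harmonic anti-self-dual 2-form on $(M^2,g)$. The Riemannian Weitzenb\"ock formula on $\Lambda^-$ reads
\[
0=\Delta\omega=\nabla^*\nabla\omega+\Big(\tfrac{s}{3}-2W^-\Big)\omega ,
\]
where $s$ is the scalar curvature and $W^-$ the anti-self-dual Weyl tensor. Pairing with $\omega$ and integrating gives
\[
\int_M\Big(|\nabla\omega|^2+\tfrac{s}{3}|\omega|^2-2\langle W^-\omega,\omega\rangle\Big)=0 .
\]
The task is therefore to show that the operator $\tfrac{s}{3}-2W^-$ is sufficiently positive on $\Lambda^-$.

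\textbf{Step 1: relate Riemannian and Bismut curvature on $\Lambda^-$.} On a Hermitian surface, $\Lambda^-$ consists of the primitive $(1,1)$-forms. These are exactly the forms orthogonal to the K\"ahler form $\omega_g$ and to the $(2,0)\oplus(0,2)$ part. Ferreira's curvature decomposition writes the Riemannian curvature operator as the Bismut curvature operator plus explicit terms quadratic in the torsion $T$. In dimension two $T$ is determined by the Lee form $\theta$, and the correction terms also contain $\nabla\theta$. I would restrict this identity to $\Lambda^-\otimes\Lambda^-$ and write
\[
\tfrac{s}{3}-2W^-=\mathcal R^b|_{\Lambda^-}+Q(\theta)+L(\nabla\theta),
\]
where $\mathcal R^b|_{\Lambda^-}$ is a combination of Bismut curvature components, $Q(\theta)$ is quadratic in $\theta$, and $L(\nabla\theta)$ is linear in $\nabla\theta$. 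The term $L(\nabla\theta)$, or its divergence part, is integrated by parts against $|\omega|^2$; this produces terms of the form $\theta\cdot\nabla|\omega|^2$.

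\textbf{Step 2: use the curvature hypothesis.} I would choose a unitary frame in which $\omega$ is diagonal, $\omega=i\lambda(e_1\wedge\bar e_1-e_2\wedge\bar e_2)$. Then $\langle\mathcal R^b\omega,\omega\rangle$ becomes a combination of Bismut curvature components $R^b_{1\bar12\bar2}$-type terms and holomorphic sectional terms. Averaging over real 2-planes, these can be bounded below by a positive multiple of $\lambda^2$ times the minimum of the Bismut sectional curvature. The Bismut curvature is not pair-symmetric, so the averaging must be chosen so that only genuine sectional curvatures $R^b(X,Y,Y,X)$ of real planes appear. I expect the planes spanned by $e_1,e_2$ and by $e_1,Je_2$ (and similar) to suffice.

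\textbf{Step 3: absorb the torsion terms (main obstacle).} After the integration by parts, the remaining terms are
\[
\int_M\big(a|\theta|^2|\omega|^2+b\,\langle\theta,d|\omega|^2\rangle\big).
\]
The mixed term is estimated by Cauchy--Schwarz as $\epsilon|\nabla|\omega||^2|\omega|^2/\ldots+C|\theta|^2|\omega|^2$. Its gradient part is controlled by the refined Kato inequality for harmonic anti-self-dual forms,
\[
|\nabla\omega|^2\ge\tfrac32\,|\nabla|\omega||^2 .
\]
The extra $\tfrac12|\nabla|\omega||^2$ is the budget available to cancel the $\theta\cdot\nabla|\omega|$ term. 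The leftover $|\theta|^2|\omega|^2$ terms must then carry a favorable sign, or be dominated by the positivity from Step 2. Balancing the constants is the delicate point. If they do not close directly, I would first change the metric conformally within its class, for instance to the Gauduchon metric, or weight $\omega$ by a power of a conformal factor. This works because $\Lambda^-$-harmonicity is conformally invariant in dimension four.

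\textbf{Conclusion.} Once these estimates close, the integrand is pointwise nonnegative and strictly positive wherever $\omega\ne0$. Hence $\omega\equiv0$. Since every class in $H^{2,-}$ is represented by a harmonic anti-self-dual form, it follows that $b_2^-=0$.
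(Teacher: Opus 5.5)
\textbf{There is a genuine gap in Step 3.} Your Steps 1--2 point in the right direction, but the proof breaks exactly at the ``balancing of constants'' you flag in Step 3, and the repair you propose does not work.

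Carried out precisely, Ferreira's decomposition gives on $\Lambda^-$
\[
\tfrac{s}{3}-2W^- = 2\bigl(\operatorname{tr}S_b-S_b\bigr)+\tfrac12|\theta|^2-\delta\theta ,
\]
where $S_b$ is the symmetrized $\Lambda^-$-block of the Bismut curvature operator. The part of the first-order correction that is not a divergence, $-\frac14(d{*}H)^-$, acts skew-adjointly on $\Lambda^-$ and drops out.

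So the quantity to bound in Step 2 is the complementary form $2(\operatorname{tr}S_b|\omega|^2-\langle S_b\omega,\omega\rangle)$, not $\langle\mathcal R^b\omega,\omega\rangle$. For unit diagonal $\omega$ it is twice the sum of the Bismut sectional curvatures of $(e_1,e_2)$, $(Je_1,Je_2)$, $(e_1,Je_2)$, $(Je_1,e_2)$. The cross terms vanish because $R^b(X,Y)$ commutes with $J$. Your guessed planes are therefore right, and this term is $\ge 8\kappa|\omega|^2$.

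Hence $a=\frac12$ and $b=-1$ in your Step 3. When you integrate the Bochner identity for $|\omega|^2$, the \emph{whole} refined Kato term $\frac32|du|^2$ (with $u=|\omega|$) is available. Even so, the pointwise expression $\frac32|du|^2-2u\langle\theta,du\rangle+\frac12|\theta|^2u^2$ is indefinite, with minimum $-\frac16|\theta|^2u^2$. This route would need an extra hypothesis such as $|\theta|^2<48\kappa$.

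\textbf{The conformal fallback fails.} Positive Bismut sectional curvature is a property of $g$, not of its conformal class. Under $\tilde g=e^{2\phi}g$ the curvature term acquires second derivatives of $\phi$. After passing to the Gauduchon metric (where $\tilde\delta\tilde\theta=0$ and the argument would indeed be immediate), you no longer know that the curvature term is positive. Nor can a conformal weight simply gauge $\theta$ away, since $\theta$ is not even closed unless $g$ is locally conformally K\"ahler.

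\textbf{The missing idea is to change the test function, not the metric.} The Bochner identity, $Q_b\ge8\kappa|\omega|^2$ and refined Kato give, on $\{u>0\}$,
\[
u\Delta u\ge\tfrac12|du|^2+\bigl(8\kappa+\tfrac12|\theta|^2-\delta\theta\bigr)u^2 .
\]
Take $f=u^p$ with $p\ge\frac12$. Multiplying $\Delta f$ by $f$, integrating by parts and completing the square yields
\[
\int_M|df-pf\theta|^2\,dV_g+p\int_M\bigl[8\kappa-(p-\tfrac12)|\theta|^2\bigr]f^2\,dV_g\le0 .
\]
The exponent $p=\frac12$ is exactly where the Kato constant $\frac32$ balances the torsion coefficients $\frac12$ and $-1$. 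Choosing $p\in(\frac12,1)$ with $(p-\frac12)\|\theta\|_{L^\infty}^2<8\kappa$ then forces $f\equiv0$.

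Since $|\omega|^p$ is not smooth at the zeros of $\omega$, this has to be run with $f_\varepsilon=(|\omega|^2+\varepsilon)^{p/2}$. One proves a uniform $W^{1,2}$ bound, using $p<1$ to control the derivative of the weight $|\omega|^2(|\omega|^2+\varepsilon)^{p-1}$ in the Lee-form term, and then passes to the limit $\varepsilon\to0$. Neither the power trick nor this regularization appears in your plan, and without them the argument does not close.
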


The outline of the proof of Theorem \ref{thm3} goes as follows. In the Hodge-Weitzenb\"ock  formula for the harmonic form, we will use Yau-Zheng's monotonicity formula and Ferreira's skew-torsion decomposition to rewrite the Levi-Civita curvature term, and turn the identity into a scalar differential inequality. {Seaman's refined Kato inequality gives the additional gradient estimate needed.} {By choosing an appropriate power of the norm of the harmonic form and applying a smoothing argument, we obtain a contradiction and thus establish the vanishing theorem.}

\vspace{0.3cm}

\section{Preliminaries}

We begin by fixing the signs and normalizations that will be used in the curvature
comparisons and the Weitzenb\"ock formula.

Let $(M^n,g)$ be a Hermitian manifold. We will denote by $J$ the associated almost complex structure and extend $g=\langle , \rangle$ bilinearly over ${\mathbb C}$. Denote by $\omega$ the K\"ahler form of $g$. The volume form becomes $dV_g=\omega^n/n!$. We write \(\delta\) for the metric codifferential:
\begin{equation}\label{eq:codifferential}
 \delta=-*d*,
\end{equation}
where $*$ is the Hodge star operator. Note that the sign on the right hand side of (\ref{eq:codifferential}) is always {a minus sign} because the real dimension of $M$ is even. The action of \(J\) on a differential form \(\phi\) is
\[
 (J\phi)(X_1,\ldots,X_k)
 =\phi(JX_1,\ldots,JX_k).
\]
For a connection \(\nabla\) on $M$, its torsion and curvature tensors are defined by
\begin{align*}
 T^\nabla(X,Y)
 &=\nabla_XY-\nabla_YX-[X,Y],\\
 R^\nabla(X,Y)Z
 &=\nabla_X\nabla_YZ-\nabla_Y\nabla_XZ-\nabla_{[X,Y]}Z.
\end{align*}
We will write
\[
 R^\nabla(X,Y,Z,W)=\langle R^\nabla(X,Y)Z,W \rangle .
\]
Note that $R^{\nabla}$ is skew-symmetric with respect to its first two positions. When \(\nabla\) is metric, namely, when $\nabla g=0$, then $R^{\nabla}$ is also skew-symmetric with respect to its last two positions, so we can define its sectional curvature by
\begin{equation}\label{eq:sectional-convention}
 K^\nabla(X\wedge Y)
 =\frac{R^\nabla(X,Y,Y,X)}{|X\wedge Y|^2}
 =-\frac{R^\nabla(X,Y,X,Y)}{|X\wedge Y|^2}.
\end{equation}
For \(X\ne0\), the holomorphic sectional curvature is
\[
 \HSC^\nabla(X)=K^\nabla(X\wedge JX).
\]
Equivalently, if \(X\) is real and
\(x=(X-iJX)/\sqrt2\), then
\begin{equation}\label{eq:complex-hsc-convention}
 \HSC^\nabla(X)
 =\frac{R^\nabla(x,\bar x,x,\bar x)}{|x|^4},
 \qquad |x|^2=g(x,\bar x)=|X|^2.
\end{equation}

A connection is \emph{Hermitian} if it preserves \(g\) and \(J\).  Given a Hermitian manifold $(M^n,g)$, there are three canonical metric connections that are widely studied.  The {\em Levi-Civita connection} \(\nabla^r\) is the unique torsion-free metric connection.  The {\em Chern connection} \(\nabla^c\) is the unique Hermitian connection whose torsion has type \((2,0)\), or equivalently,
\[
 T^c(JX,JY)+T^c(X,Y)=0.
\]
The {\em Strominger-Bismut}  {\em connection} \(\nabla^b\)
\cite{Strominger, Bismut} is the unique Hermitian connection for which
\begin{equation}\label{eq:torsion-three-form}
 H(X,Y,Z)=\langle T^b(X,Y),Z \rangle
\end{equation}
is a three-form.  {Here \(T^b\) denotes the vector-valued torsion, while
\(H\in\Omega^3(M;\mathbb R)\) denotes its associated real torsion three-form.}

We record the standard formulas relating the Strominger-Bismut torsion, the fundamental
form, and the Lee form.  The proof fixes the signs used later.

\begin{lemma}\label{lem:bismut-lee}
The Bismut connection and its torsion three-form satisfy
\begin{equation}\label{eq:bismut-explicit}
 H=Jd\omega,
 \qquad
 \nabla^b_XY
 =\nabla^r_XY+\frac12H(X,Y,\mathord\cdot)^\sharp,
\end{equation}
where $\phi^\sharp$ denotes the vector field associated to a one-form $\phi$, namely, $\phi (X) = \langle \phi^\sharp, X\rangle$ for any vector field $X$. {On a Hermitian surface there is a unique real one-form \(\theta\), called the {\em Lee form}, such that}
\begin{equation}\label{eq:lee-definition}
 d\omega=\theta\wedge\omega.
\end{equation}
With the conventions above,
\begin{equation}\label{eq:lee-torsion}
 H=-*\theta,
 \qquad |H|=|\theta|,
 \qquad *dH=\delta\theta,
\end{equation}
\end{lemma}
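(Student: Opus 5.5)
The plan is to verify the formula for $\nabla^b$ directly and then reduce the Lee-form identities to a pointwise computation in an adapted frame. Throughout, $J$ acts on forms as in the Preliminaries. On a $3$-form this means $(JH)(X,Y,Z)=H(JX,JY,JZ)$, and on a one-form it means $J\alpha=\alpha\circ J$.

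\textbf{Step 1: the connection $\nabla^b$.} Set
$$\nabla_XY=\nabla^r_XY+\tfrac12 H(X,Y,\cdot)^\sharp,\qquad H:=Jd\omega.$$
Because $H$ is a $3$-form, it is skew in its last two slots, so $\nabla$ is metric. Its torsion is $\tfrac12H(X,Y,\cdot)^\sharp-\tfrac12H(Y,X,\cdot)^\sharp=H(X,Y,\cdot)^\sharp$, so the torsion $3$-form of $\nabla$ is exactly $H$, as required by (\ref{eq:torsion-three-form}).

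It remains to show $\nabla J=0$. Since $J$ is integrable, the Nijenhuis tensor vanishes, and the standard formula reads
$$2\langle(\nabla^r_XJ)Y,Z\rangle=d\omega(X,JY,JZ)-d\omega(X,Y,Z).$$
The condition $\nabla J=0$ is equivalent to $\langle(\nabla^r_XJ)Y,Z\rangle=-\tfrac12\big(H(X,JY,Z)+H(X,Y,JZ)\big)$. To check this, I would decompose $d\omega$ into its $(2,1)+(1,2)$ types; integrability rules out $(3,0)$ and $(0,3)$ parts. On each type, $J$ acts by a fixed phase, and comparing the two sides type by type gives the identity.

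Uniqueness follows from the standard argument. The difference $D$ of two such connections satisfies three conditions:
\begin{itemize}
\item $D(X,Y,Z)=\langle D_XY,Z\rangle$ is skew in $(Y,Z)$;
\item $D$ is $J$-invariant in $(Y,Z)$;
\item $D(X,Y,Z)-D(Y,X,Z)$ is totally skew.
\end{itemize}
The first and third conditions force $D$ itself to be totally skew. A totally skew $D$ that is also $J$-invariant in its last two slots must vanish, because $J$-invariance then propagates to every pair of slots and no nonzero $3$-form has this property.

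\textbf{Step 2: existence and uniqueness of $\theta$.} On a surface, the Lefschetz map $L_\omega:\Lambda^1\to\Lambda^3$, $\alpha\mapsto\alpha\wedge\omega$, is a pointwise linear isomorphism. Both spaces have rank $4$, and the map is injective: if $\alpha\neq0$, then $\alpha\wedge\omega\neq0$, as one sees in a frame. Hence $\theta:=L_\omega^{-1}(d\omega)$ is the unique one-form satisfying (\ref{eq:lee-definition}), and it is smooth and real.

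\textbf{Step 3: $H=-*\theta$, by a frame computation.} This is the step where I expect the real difficulty: keeping the signs consistent among $*$, the action of $J$ on forms, and the orientation.

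Both sides are real-linear in $\theta$ and $U(2)$-equivariant. Moreover $U(2)$ acts transitively on unit covectors. So it suffices to check a single $\theta$.

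Take an orthonormal frame with $Je_1=e_2$ and $Je_3=e_4$, complex orientation $e^{1234}$, and $\omega=e^{12}+e^{34}$. For $\theta=e^1$ we get $d\omega=\theta\wedge\omega=e^{134}$. Using $e^i\circ J$, the coframe transforms as
$$Je^1=-e^2,\qquad Je^3=-e^4,\qquad Je^4=e^3.$$
Therefore
$$H=J(e^{134})=(-e^2)\wedge(-e^4)\wedge e^3=-e^{234}.$$
With the convention $\alpha\wedge*\beta=\langle\alpha,\beta\rangle\,e^{1234}$ we have $*e^1=e^{234}$, and hence $H=-*\theta$.

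The remaining identities follow quickly. Since $*$ is an isometry, $|H|=|\theta|$. Finally,
$$*dH=-*d*\theta=\delta\theta$$
by the definition (\ref{eq:codifferential}) of the codifferential. I would double-check the orientation and $*$ conventions once more at this point, since a single slip there flips the sign in (\ref{eq:lee-torsion}).
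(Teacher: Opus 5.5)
Your route is the same as the paper's: check that $\nabla^r+\frac12H$ is metric with torsion three-form $H$ and preserves $J$, invoke uniqueness, invert $L_\omega$, and verify $H=-*\theta$ in an adapted frame. Your uniqueness argument, the Lefschetz step, and the frame computation in Step 3 are correct. The problem is the ``standard formula'' in Step 1, which has the wrong sign for the convention you actually use. In Step 3 you take $\omega=e^{12}+e^{34}$ with $Je_1=e_2$, which means $\omega(X,Y)=g(JX,Y)$. For integrable $J$ this convention gives
\[
 2\langle(\nabla^r_XJ)Y,Z\rangle=d\omega(X,Y,Z)-d\omega(X,JY,JZ).
\]
The expression you wrote is the correct one for the opposite convention $g(X,JY)$. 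As a check, take $g=e^{2x_1}g_0$ on $\mathbb C^2$ and $(X,Y,Z)=(e_3,e_1,e_4)$. Then $(\nabla^r_XJ)Y=-e_4$, so the left side is $-2e^{2x_1}$. On the other hand $d\omega=2e^{2x_1}e^{134}$, which gives $d\omega(X,Y,Z)=-2e^{2x_1}$ and $d\omega(X,JY,JZ)=0$.

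This is not cosmetic. Because $d\omega$ has no $(3,0)+(0,3)$ part, it satisfies
$d\omega(X,Y,Z)=d\omega(X,JY,JZ)+d\omega(JX,Y,JZ)+d\omega(JX,JY,Z)$. Feeding this into your formula gives $2\langle(\nabla^r_XJ)Y,Z\rangle=+\bigl(H(X,JY,Z)+H(X,Y,JZ)\bigr)$. That says $\nabla^r-\frac12H$, not $\nabla^r+\frac12H$, preserves $J$, so the type-by-type comparison you claim ``gives the identity'' actually refutes it. Switching to $\omega=g(\cdot,J\cdot)$ does not rescue the argument either. Your formula would then be right, but the Bismut torsion would be $-Jd\omega$, and Step 3 would give $Jd\omega=+*\theta$.

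With the corrected formula, the same $(2,1)+(1,2)$ identity gives
$2\langle(\nabla^r_XJ)Y,Z\rangle=d\omega(JX,Y,JZ)+d\omega(JX,JY,Z)=-\bigl(H(X,JY,Z)+H(X,Y,JZ)\bigr)$. This is exactly the identity the paper records, and the rest of your proof then goes through unchanged. Since the stated purpose of this lemma is to fix the signs used later, this is the step you need to repair.
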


\begin{proof}
The connection defined by the second equality in  \eqref{eq:bismut-explicit} is metric and has torsion three-form \(H\).  Expanding \(d\omega\) and using the integrability of \(J\)
gives
\[
 2g((\nabla^r_XJ)Y,Z)
 +H(X,JY,Z)+H(X,Y,JZ)=0.
\]
So the connection preserves \(J\), hence is \(\nabla^b\) by its uniqueness
characterization. In complex dimension two, exterior multiplication by \(\omega\) is an
isomorphism from one-forms to three-forms.  This gives the unique form
\(\theta\) in \eqref{eq:lee-definition}.  In a \(J\)-adapted oriented
orthonormal frame,
\[
 J\theta\wedge\omega=-*\theta.
\]
Since \(J\omega=\omega\),
\(H=J(\theta\wedge\omega)=-*\theta\).  The Hodge star is an isometry, and
\(*dH=-*d*\theta=\delta\theta\), which proves
\eqref{eq:lee-torsion}.
\end{proof}

Yau and Zheng \cite[Theorems~1.14 and~1.15]{YZ} proved the following {curvature comparison formulas}, which we will use later:

\begin{proposition}[Yau--Zheng]\label{prop:curvature-comparison}
Let $(M^n,g)$ be a Hermitian manifold. For any real tangent vectors \(X,Y\) spanning a two-plane, we have
\begin{eqnarray}\label{eq:sectional-comparison}
 && \bigl(K^r(X\wedge Y)-K^b(X\wedge Y)\bigr)|X\wedge Y|^2
 \ = \ \frac14|T^b(X,Y)|^2, \\
&& \label{eq:hsc-comparison}
 \HSC^c(X)-\HSC^r(X)
 \ = \ \HSC^r(X)-\HSC^b(X)
 \ = \ \frac{|T^b(X,JX)|^2}{4|X|^4}.
\end{eqnarray}
\end{proposition}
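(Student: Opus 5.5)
The plan is to compare each curvature with the Levi-Civita curvature through the difference tensor of the connections. Write a metric connection as $\nabla=\nabla^r+A$, where $A_X$ is a skew-symmetric endomorphism for each $X$. Then
\[
R^\nabla(X,Y)Z=R^r(X,Y)Z+(\nabla^r_XA)_YZ-(\nabla^r_YA)_XZ+[A_X,A_Y]Z .
\]
For the Strominger-Bismut connection, Lemma \ref{lem:bismut-lee} gives $\langle A_XY,Z\rangle=\frac12H(X,Y,Z)$. Here $H$ is a genuine three-form, and so is $\nabla^r_XH$ for every $X$.

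\textbf{The sectional comparison \eqref{eq:sectional-comparison}.} Pair the formula above with $X$ and set $Z=Y$. Both derivative terms vanish by total skew-symmetry:
\[
\langle(\nabla^r_XA)_YY,X\rangle=\tfrac12(\nabla^r_XH)(Y,Y,X)=0,
\qquad
\langle(\nabla^r_YA)_XY,X\rangle=0 .
\]
Also $A_YY=0$, so $\langle A_XA_YY,X\rangle=0$. For the remaining quadratic term, use the skew-symmetry of $A_Y$ and $A_YX=-A_XY$:
\[
-\langle A_YA_XY,X\rangle=\langle A_XY,A_YX\rangle=-|A_XY|^2=-\tfrac14|T^b(X,Y)|^2 .
\]
Hence $R^b(X,Y,Y,X)=R^r(X,Y,Y,X)-\frac14|T^b(X,Y)|^2$, which is \eqref{eq:sectional-comparison}. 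Specializing to $Y=JX$, where $|X\wedge JX|^2=|X|^4$, gives the second equality in \eqref{eq:hsc-comparison}.

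\textbf{The Chern comparison.} It remains to show $\HSC^c(X)-\HSC^r(X)=\HSC^r(X)-\HSC^b(X)$. I would use the standard fact that the Chern and Bismut connections lie on the Gauduchon line
\[
\nabla^t=\nabla^r+A^t,
\]
where $A^t$ depends affinely on $t$. The midpoint of this line is the Lichnerowicz connection $\nabla^r-\frac12J(\nabla^rJ)$, which equals $\frac12(\nabla^b+\nabla^c)$. I would verify this directly from the first formula in \eqref{eq:bismut-explicit} and the identity $2g((\nabla^r_XJ)Y,Z)+H(X,JY,Z)+H(X,Y,JZ)=0$ obtained in the proof of Lemma \ref{lem:bismut-lee}. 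Concretely, $A^c=A^b+C$, where $C_X$ is built from $H$ and $J$: $C$ is $-A^b$ plus $-J(\nabla^rJ)$.

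The map $t\mapsto R^t(X,JX,JX,X)$ is a quadratic polynomial in $t$. So the claimed identity is equivalent to the statement that the Levi-Civita value equals the arithmetic mean of the Chern and Bismut values. This in turn amounts to the following relation between the linear and quadratic terms:
\[
\text{(derivative terms of $A^c$)}+\text{(derivative terms of $A^b$)}=-\bigl(\text{quadratic terms of $A^c$}+\text{quadratic terms of $A^b$}\bigr)\quad\text{at }(X,JX,JX,X).
\]

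\textbf{Main obstacle.} The hard step is the derivative terms for the Chern connection. Unlike for Bismut, $\langle A^c_XY,Z\rangle$ is not totally skew, so the terms $\langle(\nabla^r A^c)\cdots\rangle$ do not vanish at $(X,JX,JX,X)$ for free. I would first try to rewrite them using the Kähler-type symmetry $R^c(x,\bar x,x,\bar x)$ and the complex form \eqref{eq:complex-hsc-convention}. In a unitary frame, the Chern derivative terms reduce to expressions in $\nabla^r H$ evaluated on a totally skew slot pattern, which vanish as in the Bismut case. The Chern quadratic term $\langle A^c_X JX,A^c_{JX}X\rangle$ should then compute to $+\frac14|T^b(X,JX)|^2$, using that $H$ has type $(2,1)+(1,2)$. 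If this frame computation becomes unwieldy, the fallback is to cite \cite[Theorems~1.14, 1.15]{YZ} for the Chern part.
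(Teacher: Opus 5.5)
The paper does not prove this proposition; it quotes it from \cite[Theorems~1.14 and~1.15]{YZ}. Your derivation of \eqref{eq:sectional-comparison} is correct and self-contained, and so is the consequence $\HSC^r(X)-\HSC^b(X)=|T^b(X,JX)|^2/(4|X|^4)$. The gap is in the Chern half, which you only outline. Both concrete predictions in that outline are false, so the computation as you describe it would not close. Falling back on \cite{YZ} matches what the paper does, but then the Chern half is not proved by you.

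In the paper's conventions the Chern difference tensor is $\langle A^c_XY,Z\rangle=-\tfrac12H(X,JY,JZ)$. It is skew in $Y,Z$, the connection $\nabla^r+A^c$ preserves $J$ by the identity in the proof of Lemma~\ref{lem:bismut-lee}, and its torsion satisfies $T^c(JX,JY)=-T^c(X,Y)$. In particular $A^c_XJX=0$, so the quadratic term $\langle A^c_XJX,A^c_{JX}X\rangle$ that you single out vanishes identically. The quadratic term that survives is the one that was zero for Bismut:
\[
-\langle A^c_{JX}JX,A^c_XX\rangle=-|v|^2,
\qquad
v:=A^c_XX=A^c_{JX}JX=\tfrac12JT^b(X,JX).
\]
The derivative terms do not vanish either. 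The $\nabla^rH$ parts die because of repeated arguments. However, differentiating the $J$'s inside $A^c$ produces $\nabla^rJ$, which is itself linear in $H$. Using $(\nabla^r_XJ)JX=v=-(\nabla^r_{JX}J)X$, these terms sum to $+2|v|^2$. Hence
\[
R^c(X,JX,JX,X)-R^r(X,JX,JX,X)=2|v|^2-|v|^2=\tfrac14|T^b(X,JX)|^2 .
\]
The derivative terms contribute $\tfrac12|T^b(X,JX)|^2$ rather than $0$. The quadratic terms contribute $-\tfrac14|T^b(X,JX)|^2$ rather than $+\tfrac14|T^b(X,JX)|^2$. Only the total agrees with your guess, and this bookkeeping is what you need to supply.

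There are two smaller points. First, from $A^c+A^b=-J\nabla^rJ$ one gets $C=A^c-A^b=-J\nabla^rJ-2A^b$, not $-A^b-J\nabla^rJ$. Second, the Gauduchon line does no work here. Since $\nabla^r$ is not on it, the quadratic dependence of $R^t$ on $t$ says nothing about $R^r$, and recasting the claim as an arithmetic mean is a triviality.
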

In particular, if a Hermitian manifold $(M^n,g)$ has positive Strominger-Bismut sectional curvature, then its Levi-Civita (Riemannian) sectional curvature $K^r$ and Chern holomorphic sectional curvature $\HSC^c$ will both be positive.

\vspace{0.3cm}

\section{Strominger-Bismut curvature on anti-self-dual two-forms}
\label{sec:curvature-block}

Now suppose that $(M^2,g)$ is a compact Hermitian surface. The complex structure determines an orientation, which gives the Hodge decomposition
\[
 \Lambda^2=\Lambda^+\oplus\Lambda^-.
\]
Let \(\Omega_-^2(M)=\Gamma(M,\Lambda^-)\).  By Hodge
theory, the negative index \(b_2^-(M)\) of the intersection form is the
dimension of the space of harmonic forms in \(\Omega_-^2(M)\). We have
\begin{equation}\label{eq:hermitian-two-form-splitting}
 \Lambda^+=\mathbb R\omega\oplus\Lambda_J^-,
 \qquad
 \Lambda^-=\Lambda^{1,1}_{0,\mathbb R},
\end{equation}
where
\begin{equation*}
 \Lambda_J^-
 =\{\gamma\in\Lambda^2:
 \gamma(J\mathord\cdot,J\mathord\cdot)=-\gamma\}, \ \ \ \ \
  \Lambda^{1,1}_{0,\mathbb R}
 =\{\eta\in\Lambda^2:
 \eta(J\mathord\cdot,J\mathord\cdot)=\eta,
 \ \langle\eta,\omega\rangle=0\} .
\end{equation*}
The latter is the space of real primitive \((1,1)\)-forms.
Note that the superscript in \(\Lambda_J^-\) refers to the action of \(J\), whereas the
superscripts in \(\Lambda^\pm\) refer to the Hodge star.  In particular,
\(\Lambda_J^-\subset\Lambda^+\).

Next let us assume that $g$ has positive Bismut sectional curvature. By the compactness of $M^2$, there exists a uniform constant
\begin{equation}\label{eq:kappa}
 \kappa
 =\min_{\pi\in\Gr_2(TM)}K^b(\pi)>0,
\end{equation}
where $\Gr_2(TM)$ is the bundle of unoriented two-planes in the tangent spaces. With the sign convention in \eqref{eq:sectional-convention}, define the {\em Strominger-Bismut
curvature operator} \(\mathcal R^b\) by
\begin{equation}\label{eq:positive-curvature-operator}
 \langle\mathcal R^b(X\wedge Y),Z\wedge W\rangle
 =-R^b(X,Y,Z,W).
\end{equation}
Then its quadratic form on a unit decomposable two-form is the Strominger-Bismut sectional
curvature.  We remark that the operator need not be self-adjoint because Strominger-Bismut curvature
need not be symmetric under interchange of the first and last pairs.  Set
\begin{equation}\label{eq:Sb-definition}
 S_b=\Sym\bigl(P_-\mathcal R^b|_{\Lambda^-}\bigr),
\end{equation}
where \(P_-\) is orthogonal projection onto \(\Lambda^-\) and
\(\Sym B=(B+B^*)/2\).  Only this symmetric block contributes to the
quadratic estimates below.

\begin{lemma}\label{lem:positive-block}
Let $(M^2,g)$ be a compact Hermitian surface with positive Strominger-Bismut sectional curvature. Then the self-adjoint endomorphism \(S_b\) satisfies
\begin{equation}\label{eq:Sb-lower-bound}
 S_b\ge2\kappa I_{\Lambda^-}.
\end{equation}
\end{lemma}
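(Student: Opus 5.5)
The idea is to write any unit $\eta\in\Lambda^-$ as an average of two unit decomposable $2$-forms, adding and subtracting a form from $\Lambda_J^-$ on which the Strominger-Bismut curvature operator has no diagonal contribution. Since $S_b$ is self-adjoint, it suffices to show $\langle S_b\eta,\eta\rangle\ge 2\kappa$ for every unit $\eta\in\Lambda^-$ at every point. Write $Q(\xi)=\langle\mathcal R^b\xi,\xi\rangle$. Because $\eta\in\Lambda^-$, we have $\langle S_b\eta,\eta\rangle=\langle P_-\mathcal R^b\eta,\eta\rangle=Q(\eta)$. The symmetrization in \eqref{eq:Sb-definition} does not change diagonal values, so only the quadratic form $Q$ matters.

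\emph{Step 1: $\mathcal R^b$ takes values in $\Lambda^{1,1}$.} Since $\nabla^b$ is Hermitian, $\nabla^bJ=0$. Hence $R^b(X,Y)$ commutes with $J$, and $R^b(X,Y,JZ,JW)=R^b(X,Y,Z,W)$. By \eqref{eq:positive-curvature-operator}, the $2$-form $\mathcal R^b(\xi)$ is therefore $J$-invariant for every $\xi\in\Lambda^2$. It follows that $\langle\mathcal R^b\xi,\beta\rangle=0$ for all $\beta\in\Lambda_J^-$, because $J$ acts isometrically on $\Lambda^2$ and $\Lambda_J^-$ is its $(-1)$-eigenspace, orthogonal to the $(+1)$-eigenspace. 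In particular $Q(\beta)=0$ for $\beta\in\Lambda_J^-$.

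\emph{Step 2: decomposition into decomposables.} Fix a unit $\beta\in\Lambda_J^-$; it is nonzero since $\Lambda_J^-$ has rank $2$. Set $x=(\eta+\beta)/\sqrt2$ and $y=(\eta-\beta)/\sqrt2$. Because $\Lambda^+\perp\Lambda^-$, both $x$ and $y$ have unit norm. Moreover, using $\xi\wedge\xi=\langle\xi,*\xi\rangle\,dV_g$, we get
\[
2\,x\wedge x=\eta\wedge\eta+\beta\wedge\beta\pm 2\eta\wedge\beta=\bigl(-|\eta|^2+|\beta|^2\bigr)dV_g\pm 2\langle\eta,*\beta\rangle dV_g=0,
\]
and similarly $y\wedge y=0$. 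The cross term vanishes since $*\beta=\beta\in\Lambda^+$ is orthogonal to $\eta$. A $2$-form in dimension four with $\xi\wedge\xi=0$ is decomposable, so $x$ and $y$ are unit simple $2$-vectors, i.e.\ they represent oriented planes. Thus $Q(x)=K^b(\pi_x)\ge\kappa$ and $Q(y)=K^b(\pi_y)\ge\kappa$ by \eqref{eq:kappa}, using the remark that $Q$ on unit decomposables is the Strominger-Bismut sectional curvature.

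\emph{Step 3: polarization.} By bilinearity,
\[
Q(x)+Q(y)=\tfrac12\bigl(Q(\eta+\beta)+Q(\eta-\beta)\bigr)=Q(\eta)+Q(\beta).
\]
The mixed terms $\langle\mathcal R^b\eta,\beta\rangle+\langle\mathcal R^b\beta,\eta\rangle$ cancel between the two summands, even though $\mathcal R^b$ is not self-adjoint. By Step 1, $Q(\beta)=0$, hence $Q(\eta)=Q(x)+Q(y)\ge2\kappa$, which is \eqref{eq:Sb-lower-bound}.

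The only real subtlety is Step 1, namely checking that under the paper's sign conventions the $J$-invariance sits in the slots paired against $\beta$. This is exactly what makes $Q(\beta)$ vanish without any symmetry of $R^b$ under exchanging pairs. The decomposability check in Step 2 is routine.
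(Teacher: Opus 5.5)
Your proof is correct and follows essentially the same route as the paper. You write a unit $\eta\in\Lambda^-$ via the unit decomposable forms $(\eta\pm\beta)/\sqrt2$ with $\beta\in\Lambda_J^-$, and use $\nabla^bJ=0$ to show that $\mathcal R^b$ takes values in $J$-invariant forms, so the $\beta$-contributions drop out. The only cosmetic differences are that you check decomposability via $\xi\wedge\xi=0$ rather than the equal-norm criterion, and you cancel the mixed terms by polarization, whereas the paper uses $\langle\mathcal R^b\cdot,\beta\rangle=0$ in the second slot directly.
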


\begin{proof}
Fix \(x\in M\), a unit form \(v\in\Lambda_x^-\), and a unit
\(\gamma\in(\Lambda_J^-)_x\subset\Lambda_x^+\).  In real four dimensions, a two-form is
decomposable precisely when its self-dual and anti-self-dual parts have equal
norm.  Thus
\[
 \sigma_\pm=\frac{v\pm\gamma}{\sqrt2}
\]
are unit decomposable two-forms.

Since \(\nabla^bJ=0\), the endomorphism \(R^b(X,Y)\) commutes with \(J\).
The two-form obtained from its last two slots is therefore \(J\)-invariant
and orthogonal to \(\Lambda_J^-\).  Equivalently,
\[
 \langle\mathcal R^b\beta,\gamma\rangle=0,
 \qquad \forall \ \beta\in\Lambda_x^2.
\]
Therefore we have
$$ K^b(\sigma_+) = \langle \mathcal R^b\sigma_+,\sigma_+\rangle = \frac12 \langle \mathcal R^b(v+\gamma),v+\gamma \rangle = \frac12 \langle \mathcal R^b(v+\gamma),v \rangle.$$
Expanding $K^b(\sigma_-)$ similarly and adding to the above, we obtain
\[
 K^b(\sigma_+)+K^b(\sigma_-) = \langle \mathcal R^bv,v\rangle = \langle S_bv,v\rangle.
\]
Hence
\[ \langle S_bv,v\rangle \ge2\kappa.
\]
This completes the proof of the lemma.
\end{proof}

Ferreira's four-dimensional skew-torsion decomposition
\cite[{Theorem~3.2}]{Ferreira} has the following consequence for the symmetric
anti-self-dual block.

\begin{proposition}[Ferreira]\label{prop:ferreira-block}
Let $(M^2,g)$ be a Hermitian surface. On \(\Lambda^-\),
\begin{equation}\label{eq:ferreira-block}
 S_b=W^-+
 \left(
  \frac{\Scal_g}{12}
  -\frac{|H|^2}{8}
  +\frac{*dH}{4}
 \right)I_{\Lambda^-},
\end{equation}
where \(W^-\) is the anti-self-dual Weyl curvature of \(g\).
\end{proposition}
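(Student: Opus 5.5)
The plan is to compare $\mathcal R^b$ with the Riemannian curvature operator through the standard formula for a metric connection with skew torsion, and then read off the $\Lambda^-$ block using the four-dimensional identities of Lemma \ref{lem:bismut-lee}. Since $\nabla^b=\nabla^r+\frac12 H(X,Y,\cdot)^\sharp$ by \eqref{eq:bismut-explicit}, a direct expansion of $R^b(X,Y)Z$ should give
\[
 R^b(X,Y,Z,W)=R^r(X,Y,Z,W)+\tfrac12(\nabla^r_XH)(Y,Z,W)-\tfrac12(\nabla^r_YH)(X,Z,W)+Q_H(X,Y,Z,W).
\]
Here $Q_H$ is quadratic in $H$, built from $\langle H(X,W,\cdot),H(Y,Z,\cdot)\rangle-\langle H(Y,W,\cdot),H(X,Z,\cdot)\rangle$ with coefficient $\pm\frac14$. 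The signs will need to be fixed once, carefully, against the conventions \eqref{eq:sectional-convention} and \eqref{eq:positive-curvature-operator}. One check is that the $Q_H$ contribution to $K^b(X\wedge Y)$ must reproduce $-\frac14|T^b(X,Y)|^2$, which is Proposition \ref{prop:curvature-comparison}.

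Next I would symmetrize in the pairs $(X,Y)\leftrightarrow(Z,W)$, since $S_b$ only sees $\Sym\mathcal R^b$.
\begin{itemize}
\item The $R^r$ term is already pair-symmetric. On $\Lambda^-$ its block is the classical $W^-+\frac{\Scal_g}{12}I$.
\item $Q_H$ is pair-symmetric as it stands.
\item For the derivative terms, total skew-symmetry of $\nabla^r_\cdot H$ gives $(\nabla_ZH)(W,X,Y)=(\nabla_ZH)(X,Y,W)$ and $(\nabla_WH)(Z,X,Y)=(\nabla_WH)(X,Y,Z)$. Adding the term to its pair-swap therefore yields exactly $\frac12\,dH(X,Y,Z,W)$, so the symmetrized derivative contribution is $\frac14\,dH$.
\item In dimension four, $dH=(*dH)\,dV_g$. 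The four-form $dV_g$, viewed as a bilinear form on $\Lambda^2$, is $\langle *\alpha,\beta\rangle$, and $*=-I$ on $\Lambda^-$. With the minus sign in \eqref{eq:positive-curvature-operator}, this gives $+\frac{*dH}{4}I_{\Lambda^-}$.
\end{itemize}

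For the quadratic term, I use $H=-*\theta$ from \eqref{eq:lee-torsion} and work pointwise in an oriented orthonormal frame with $\theta=|\theta|e^4$. Then $H=\pm|H|\,e^{123}$, and the operator induced by $Q_H$ on $\Lambda^2$ is $-\frac14|H|^2$ times the orthogonal projection onto $\Lambda^2(\mathrm{span}\{e_1,e_2,e_3\})$. Its sign again needs matching with Proposition \ref{prop:curvature-comparison}; it is indeed negative there. Each $e_{ij}$ with $i,j\le3$ splits as $\frac{1}{\sqrt2}\bigl((e_{ij}+e_{k4})/\sqrt2+(e_{ij}-e_{k4})/\sqrt2\bigr)$. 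Hence compressing this projection to $\Lambda^-$ gives $\frac12 I_{\Lambda^-}$, and the quadratic contribution is $-\frac{|H|^2}{8}I_{\Lambda^-}$. Summing the three contributions gives \eqref{eq:ferreira-block}. As a consistency check, this term is rotation-invariant on $\Lambda^-$, as it must be.

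The main obstacle is purely bookkeeping of signs and factors. These are the sign of $Q_H$, the factor $\frac12$ versus $\frac14$ in front of $dH$, and the orientation sign relating $dV_g$ to $*$ on $\Lambda^-$. I would fix all three by testing the final formula against a known case. Hopf surfaces with the standard metric are the natural test: they have Bismut-flat metrics, so $S_b=0$, while $W^-=0$ (they are locally conformally flat), $\theta$ is parallel so $*dH=0$, and $\Scal_g/12=|H|^2/8$ can be checked directly on $S^3\times\mathbb R$. This check, together with the sectional-curvature identity \eqref{eq:sectional-comparison}, should pin down every sign.
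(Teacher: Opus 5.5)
Your proposal is correct, but it reaches \eqref{eq:ferreira-block} by a different route from the paper.

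\textbf{The paper's route.} It does no computation of its own. It quotes Ferreira's four-dimensional skew-torsion decomposition, which gives the full, non-symmetric $\Lambda^-$ block as $W^-+(\Scal^b/12+*dH/4)I_{\Lambda^-}-\frac14(d(*H))^-$. It then discards the last term, because that term acts on $\Lambda^-$ by cross product and is therefore skew-adjoint. It converts $\Scal^b$ to $\Scal_g$ using Ferreira's identity $\Scal^b=\Scal_g-\frac32|H|^2$. Along the way it has to reconcile Ferreira's sign convention for the lowered curvature tensor with \eqref{eq:positive-curvature-operator}.

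\textbf{Your route.} You derive the block directly from $\nabla^b=\nabla^r+\frac12H$, and each step checks out.
\begin{itemize}
\item The expansion of $R^b$ has exactly the derivative terms you wrote. The coefficient on $Q_H$ is $-\frac14$, forced by \eqref{eq:sectional-comparison} as you say, since the derivative terms vanish at $(X,Y,Y,X)$.
\item The pair-symmetrized derivative part is $\frac14\,dH(X,Y,Z,W)$. Combining $dV_g(X,Y,Z,W)=\langle *(X\wedge Y),Z\wedge W\rangle$ with the minus sign in \eqref{eq:positive-curvature-operator} gives $+\frac14(*dH)I_{\Lambda^-}$.
\item Take $H=h\,e^{123}$, let $\pi$ be orthogonal projection onto $\mathrm{span}\{e_1,e_2,e_3\}$, and let $P$ be projection onto its $\Lambda^2$. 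Then $Q_H(X,Y,Z,W)=\frac{h^2}{4}\langle \pi X\wedge\pi Y,\pi Z\wedge\pi W\rangle$. So $Q_H$ contributes $-\frac{|H|^2}{4}P$ to $\mathcal R^b$, whose compression to $\Lambda^-$ is $-\frac{|H|^2}{8}I_{\Lambda^-}$.
\end{itemize}
Taking the trace of $-\frac{|H|^2}{4}P$ over $\Lambda^2$ reproduces $\Scal^b=\Scal_g-\frac32|H|^2$, so the two computations are consistent.

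\textbf{What each route buys.} Your route is self-contained and fixes every sign internally rather than by translating from \cite{Ferreira}. Because you symmetrize first, you never meet the skew term $(d(*H))^-$. It also shows that nothing Hermitian is used beyond $H$ being a three-form. In dimension four every three-form is decomposable, so Lemma \ref{lem:bismut-lee} is only a convenience for choosing the frame. The paper's route is shorter and records the full unsymmetrized block, which your computation does not produce.

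Your Hopf-surface test is consistent: on the unit $S^3\times\mathbb R$ one has $\Scal_g/12=|H|^2/8=\frac12$. It only probes the $|H|^2$ coefficient, though. The sign of the $dH$ term rests on your direct computation, which is right.
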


\begin{proof}
Ferreira's formula gives the \(\Lambda^-\)-block as
\[
 W^-+
 \left(\frac{\Scal^b}{12}+\frac{*dH}{4}\right)I_{\Lambda^-}
 {-\frac14(d(*H))^-}.
\]
Here \(\Scal^b\) is the scalar curvature of \(\nabla^b\).
{The two-form \((d(*H))^-\) determines, via the cross product on each
oriented Euclidean three-space \(\Lambda_x^-\), a skew-adjoint endomorphism.
Hence this term disappears after taking the symmetric part.}
Ferreira's curvature operator agrees with
\(\mathcal R^b\) in \eqref{eq:positive-curvature-operator}; equivalently, the
lowered curvature tensor in \cite{Ferreira} has the opposite sign from the
convention here.  Finally,
\(\Scal^b=\Scal_g-\tfrac32|H|^2\) by
\cite[formula (2.6)]{Ferreira}.  Substitution gives
\eqref{eq:ferreira-block}.
\end{proof}

In the Hodge--Weitzenb\"ock formula in our later discussion, we will use the following quadratic form associated with \(S_b\).  For \(\xi\in\Lambda^-\), define
\begin{equation}\label{eq:Qb-definition}
 Q_b(\xi)
 =2\bigl((\operatorname{tr}S_b)|\xi|^2
 -\langle S_b\xi,\xi\rangle\bigr).
\end{equation}
Since \(\Lambda^-\) has rank three, \eqref{eq:Sb-lower-bound} gives
\begin{equation}\label{eq:Qb-lower-bound}
 Q_b(\xi)\ge8\kappa|\xi|^2.
\end{equation}
Indeed, if we denote by $a,b,c$ the eigenvalues of $S_b$, then $\frac12 Q_b$ has eigenvalues $b+c$, $c+a$, and $a+b$. Each of them is at least $4\kappa$ by \eqref{eq:Sb-lower-bound}, so \eqref{eq:Qb-lower-bound} holds.

We can now replace the Riemannian curvature term in the Hodge--Weitzenb\"ock
formula by the positive quantity \(Q_b\) and explicit Lee-form terms.

\begin{lemma}\label{lem:pointwise}
Let $(M^2,g)$ be a compact Hermitian surface, and let \(\alpha\in\Omega_-^2(M)\) be harmonic for the Levi-Civita Hodge
Laplacian.  With the function Laplacian
\(\Delta=\operatorname{div}\nabla^r\),
\begin{equation}\label{eq:pointwise-bochner}
 \frac12\Delta|\alpha|^2
 =|\nabla^r\alpha|^2+Q_b(\alpha)
 +\left(\frac{|H|^2}{2}-\delta\theta\right)|\alpha|^2.
\end{equation}
\end{lemma}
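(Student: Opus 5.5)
We start from the classical Weitzenböck formula for two-forms on an oriented Riemannian four-manifold. For an anti-self-dual form \(\alpha\) with \(d\alpha=\delta\alpha=0\) it reads
\[
 0=\nabla^{r*}\nabla^r\alpha+\Bigl(\frac{\Scal_g}{3}-2W^-\Bigr)\alpha .
\]
Taking the inner product with \(\alpha\) and using \(\frac12\Delta|\alpha|^2=|\nabla^r\alpha|^2-\langle\nabla^{r*}\nabla^r\alpha,\alpha\rangle\), we obtain
\[
 \frac12\Delta|\alpha|^2=|\nabla^r\alpha|^2+\frac{\Scal_g}{3}|\alpha|^2-2\langle W^-\alpha,\alpha\rangle .
\]
The normalization of \(W^-\) as an endomorphism of \(\Lambda^-\) should be the one used by Ferreira in Proposition \ref{prop:ferreira-block}. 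I would double-check the constant \(\Scal_g/3\) against the round \(S^4\), where \(W^-=0\) and the curvature term acting on two-forms is \(2(n-2)=4\cdot\Scal_g/12\). With \(\Scal_g=12\) this gives \(\Scal_g/3=4\), which is consistent.

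The second step is to express \(Q_b\) through Proposition \ref{prop:ferreira-block}. Write \(c=\frac{\Scal_g}{12}-\frac{|H|^2}{8}+\frac{*dH}{4}\). Since \(W^-\) is traceless and symmetric, we have \(\operatorname{tr}S_b=3c\), and hence
\[
 Q_b(\alpha)=2\bigl(3c|\alpha|^2-\langle W^-\alpha,\alpha\rangle-c|\alpha|^2\bigr)=4c|\alpha|^2-2\langle W^-\alpha,\alpha\rangle .
\]
Expanding \(4c\) gives
\[
 Q_b(\alpha)=\frac{\Scal_g}{3}|\alpha|^2-2\langle W^-\alpha,\alpha\rangle-\frac{|H|^2}{2}|\alpha|^2+(*dH)|\alpha|^2 .
\]
Therefore the Riemannian curvature term in the Weitzenböck formula equals
\[
 Q_b(\alpha)+\Bigl(\frac{|H|^2}{2}-*dH\Bigr)|\alpha|^2 .
\]
By Lemma \ref{lem:bismut-lee}, \(*dH=\delta\theta\), and substituting this yields exactly \eqref{eq:pointwise-bochner}.

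The main obstacle is fixing conventions consistently rather than any conceptual step. Two points need care. First, the factor \(2\) in front of \(W^-\) depends on whether \(W^-\) is normalized as in Ferreira's decomposition, i.e. as the block of the curvature operator with eigenvalue convention \(\langle\mathcal R\,\sigma,\sigma\rangle=\) sectional curvature. Second, the sign convention of the Laplacian (here \(\Delta=\operatorname{div}\nabla^r\), which is nonpositive) must be handled correctly. The cleanest way to settle both is to use the Bochner formula in curvature-operator form: \(\langle\mathfrak R(\alpha),\alpha\rangle\) on two-forms equals \(2\sum_{i<j}\) of eigenvalue sums. On \(\Lambda^-\) this gives \(2\bigl((\operatorname{tr}\mathcal R^-)|\alpha|^2-\langle\mathcal R^-\alpha,\alpha\rangle\bigr)\), where \(\mathcal R^-\) is the \(\Lambda^-\)-block \(W^-+\frac{\Scal_g}{12}I\) of the Levi-Civita curvature operator. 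This is precisely the form of \(Q_b\) in \eqref{eq:Qb-definition} with \(S_b\) replaced by the Riemannian block. The lemma then reduces to the observation that \(S_b\) differs from that block by the scalar \(-\frac{|H|^2}{8}+\frac{*dH}{4}\). In \(Q_b\) this scalar is multiplied by \(2\cdot 2=4\), which produces the correction term \(\bigl(\frac{|H|^2}{2}-\delta\theta\bigr)|\alpha|^2\).
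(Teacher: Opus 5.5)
Your proposal is correct and follows the paper's proof. You trace Ferreira's block formula using $\operatorname{tr}W^-=0$ and use $*dH=\delta\theta$ to identify $\frac{\Scal_g}{3}|\alpha|^2-2\langle W^-\alpha,\alpha\rangle$ with $Q_b(\alpha)+\bigl(\frac{|H|^2}{2}-\delta\theta\bigr)|\alpha|^2$, then substitute this into the Weitzenb\"ock formula for harmonic anti-self-dual forms paired with $\alpha$, with the same sign handling for $\Delta=\operatorname{div}\nabla^r$.
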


\begin{proof}
Taking the trace in \eqref{eq:ferreira-block}, using
\(\operatorname{tr}W^-=0\), and applying \eqref{eq:lee-torsion} gives
\begin{equation}\label{eq:Qb-riemannian-term}
 Q_b(\alpha)
 +\left(\frac{|H|^2}{2}-\delta\theta\right)|\alpha|^2
 =\frac{\Scal_g}{3}|\alpha|^2
 -2\langle W^-\alpha,\alpha\rangle.
\end{equation}
With the curvature conventions above, the Hodge-Weitzenb\"ock formula for a
harmonic anti-self-dual two-form is
\cite[p.~273]{LeBrun}
\[
 0=(\nabla^r)^*\nabla^r\alpha
 +\frac{\Scal_g}{3}\alpha-2W^-\alpha.
\]
Pairing with \(\alpha\) and using
\[
 \langle(\nabla^r)^*\nabla^r\alpha,\alpha\rangle
 =|\nabla^r\alpha|^2-\frac12\Delta|\alpha|^2
\]
proves \eqref{eq:pointwise-bochner}.
\end{proof}

Note that the above pointwise formula still contains the full covariant derivative of
\(\alpha\).  The following sharper form of Kato's inequality relates that term
to the derivative of the scalar norm.  Seaman's refined Kato inequality
\cite[Theorem~1]{Seaman} states that a harmonic self-dual or
anti-self-dual two-form on an oriented Riemannian four-manifold satisfies
\begin{equation}\label{eq:refined-kato}
 |\nabla^r\alpha|^2
 \ge\frac32\,|d|\alpha||^2
\end{equation}
on \(\{\alpha\ne0\}\), and hence almost everywhere because
\(|\alpha|\) is Lipschitz. In the next section, which is the main part of the argument, we will use this inequality to get the vanishing of {anti-self-dual} harmonic two-forms.

\vspace{0.3cm}

\section{Vanishing of anti-self-dual harmonic forms}
\label{sec:vanishing}

Let $(M^2,g)$ be a compact Hermitian surface with positive Strominger-Bismut sectional curvature. Suppose $\alpha$ is a harmonic anti-self-dual two-form. We want to show that $\alpha =0$. Assume the contrary, namely, $\alpha$ is not identically zero. Our goal is to {derive a contradiction}.

The proof will be divided into two steps.  In the first step, we work away from the zero set of $\alpha$  and determine which power of its norm is compatible with both the refined
Kato inequality and the Lee-form terms.  The resulting power is generally not
smooth across the zero set.  Then in the second step we replace it by smooth functions, obtain
estimates independent of the regularization parameter, and pass to the limit.

\subsection{The calculation away from the zero set}

Let $(M^2,g)$ be a compact Hermitian surface with positive Strominger-Bismut sectional curvature and  assume that $\alpha$ is a harmonic anti-self-dual two-form which is not identically zero. Set
\(u=|\alpha|\).  We begin with the scalar inequality that will be used in both
steps.  Equations
\eqref{eq:pointwise-bochner},
\eqref{eq:Qb-lower-bound}, and \eqref{eq:refined-kato} give, on
\(\{u>0\}\),
\begin{equation}\label{eq:scalar-u-inequality}
 u\Delta u
 \ge\frac12|du|^2
 +\left(8\kappa+\frac12|H|^2-\delta\theta\right)u^2.
\end{equation}
For any \(p>0\),
\begin{align}
 \Delta(u^p)
 &=
 p u^{p-1}\Delta u+p(p-1)u^{p-2}|du|^2 \notag\\
 &\ge
 p\left(8\kappa+\frac12|H|^2-\delta\theta\right)u^p
 +p\left(p-\frac12\right)u^{p-2}|du|^2.
 \label{eq:formal-power}
\end{align}
This formula places one restriction on \(p\): its last term is nonnegative
when \(p\ge1/2\).  To see the second restriction, temporarily suppose that
\(u^p\) is smooth on all of \(M\).  For \(p\ge1/2\), we may discard the last
term, multiply by \(u^p\), and integrate by parts.  Using
\(|H|=|\theta|\) and completing the square gives
\[
 0\ge
 \int_M|d(u^p)-pu^p\theta|^2\,dV_g
 +p\int_M
 \left[8\kappa-\left(p-\frac12\right)|H|^2\right]u^{2p}\,dV_g.
\]
For the coefficient of \(|H|^2\) to be nonnegative independently of the
curvature term, one must have \(p\le1/2\).  Thus the unique value at which the
derivative and torsion coefficients are both nonnegative separately is
\[
 p=\frac12.
\]
At \(p=1/2\), the coefficient of the derivative term in
\eqref{eq:formal-power} is zero, and the coefficient of \(|H|^2\) in the
integrated inequality is also zero.  Thus \(u^{1/2}\) is the power for which
the two restrictions agree.

For completeness, substituting \(p=1/2\) into \eqref{eq:formal-power} gives,
on \(\{u>0\}\),
\begin{equation}\label{eq:formal-half-power}
 \Delta(u^{1/2})
 \ge
 \left(4\kappa+\frac14|\theta|^2-\frac12\delta\theta\right)u^{1/2}.
\end{equation}
If \(u^{1/2}\) were smooth, then {multiplying \eqref{eq:formal-half-power} by \(u^{1/2}\), integrating over \(M\), and using the identity}
\(\int_M(\delta\theta)u=\int_M\langle\theta,du\rangle\) {would give}
\begin{equation}\label{eq:formal-half-lee}
 0\ge
 \int_M\left|d(u^{1/2})-\frac12u^{1/2}\theta\right|^2\,dV_g
 +4\kappa\int_Mu\,dV_g.
\end{equation}
Both terms on the right are nonnegative, and the second vanishes only if
\(u=0\).  This is the desired vanishing inequality, but the calculation does
not yet prove the theorem: \(u=|\alpha|\) is only Lipschitz at its zero set, so
\(u^{1/2}\) is not an admissible smooth test function there.

The next step addresses exactly this obstruction.  We take \(p>1/2\), so that
the derivative term in \eqref{eq:formal-power} remains nonnegative, and choose
\(p\) close enough to \(1/2\) that positive curvature controls the remaining
\(|H|^2\)-term.  We then regularize the smooth squared norm \(|\alpha|^2\),
rather than the possibly non-smooth norm \(|\alpha|\).

\subsection{Regularization at the zero set}

Suppose that \(\alpha\in\Omega_-^2(M)\) is a harmonic anti-self-dual form on $(M^2,g)$ that is not identically zero.  Set
\[
 \rho=|\alpha|^2,
 \qquad u=|\alpha|=\sqrt\rho.
\]
The squared norm \(\rho\) is smooth and will be used in the regularization;
the norm \(u\) is used in the refined Kato terms and in the limiting
function.  Unlike \(\rho\), it need not be differentiable on the zero set of
\(\alpha\).

Since \(H\) is bounded, we may choose \(p\in(1/2,1)\) close enough to
\(1/2\) such that
\begin{equation}\label{eq:positive-potential-choice}
 \left(p-\frac12\right)\|H\|_{L^\infty(M)}^2<8\kappa.
\end{equation}
The restriction \(p<1\) will be used when differentiating the regularized
weights below.  Set
\begin{equation}\label{eq:f-epsilon-definition}
 f=u^p=\rho^{p/2},
 \qquad
 f_\varepsilon=(\rho+\varepsilon)^{p/2}
 \quad (\varepsilon>0).
\end{equation}
Thus \(f\) is the nearby power used in the rigorous argument, while
\(f_\varepsilon\) is its smooth approximation and converges pointwise to
\(f\).

The chain rule and Lemma~\ref{lem:pointwise} give, almost everywhere,
\begin{align}
 \Delta f_\varepsilon
 ={}&p(\rho+\varepsilon)^{p/2-1}
 \left[
  |\nabla^r\alpha|^2+Q_b(\alpha)
  +\left(\frac{|H|^2}{2}-\delta\theta\right)\rho
 \right] \notag\\
 &+p(p-2)\rho(\rho+\varepsilon)^{p/2-2}|du|^2.
 \label{eq:chain-rule}
\end{align}
By \eqref{eq:refined-kato}, the two gradient terms on the right are bounded
below by
\begin{equation}\label{eq:gradient-coefficient}
 p(\rho+\varepsilon)^{p/2-2}
 \left[
  \left(p-\frac12\right)\rho+\frac32\varepsilon
 \right]|du|^2,
\end{equation}
which is nonnegative.  Equations \eqref{eq:Qb-lower-bound},
\eqref{eq:chain-rule}, and \eqref{eq:gradient-coefficient} imply
\begin{equation}\label{eq:f-differential-inequality}
 \Delta f_\varepsilon
 \ge p\rho(\rho+\varepsilon)^{p/2-1}
 \left(8\kappa+\frac{|H|^2}{2}-\delta\theta\right).
\end{equation}

After multiplication by \(f_\varepsilon\), the zeroth-order terms have the
common weight
\begin{equation}\label{eq:psi-definition}
 \psi_\varepsilon
 =\rho(\rho+\varepsilon)^{p-1}
 =\frac{\rho}{\rho+\varepsilon}f_\varepsilon^2.
\end{equation}
Since \(M\) is closed,
\(\int f_\varepsilon\Delta f_\varepsilon=-\int|df_\varepsilon|^2\).
Integration of \eqref{eq:f-differential-inequality} therefore gives
\begin{align}
 0\ge{}&
 \int_M|df_\varepsilon|^2\,dV_g
 +8p\kappa\int_M\psi_\varepsilon\,dV_g
 +\frac p2\int_M|H|^2\psi_\varepsilon\,dV_g \notag\\
 &-p\int_M(\delta\theta)\psi_\varepsilon\,dV_g.
 \label{eq:regularized-integral}
\end{align}
The codifferential is the formal adjoint of \(d\), so
\begin{equation}\label{eq:lee-integration-by-parts}
 \int_M(\delta\theta)\psi_\varepsilon\,dV_g
 =\int_M\langle\theta,d\psi_\varepsilon\rangle\,dV_g.
\end{equation}

We next obtain a uniform Sobolev bound for \(f_\varepsilon\) and control the
derivative of \(\psi_\varepsilon\), which is needed for the Lee-form term.
Differentiating \eqref{eq:psi-definition} gives
\begin{equation}\label{eq:psi-derivative}
 d\psi_\varepsilon
 =(\rho+\varepsilon)^{p-2}(p\rho+\varepsilon)\,d\rho.
\end{equation}
Since \(p<1\),
\begin{equation}\label{eq:psi-f-derivative-bound}
 p|d\psi_\varepsilon|
 \le2f_\varepsilon|df_\varepsilon|.
\end{equation}
After dropping the nonnegative zeroth-order terms in
\eqref{eq:regularized-integral}, equations
\eqref{eq:lee-integration-by-parts} and
\eqref{eq:psi-f-derivative-bound}, together with
\(|\theta|=|H|\), give
\begin{equation}\label{eq:H1-bound}
 \|df_\varepsilon\|_{L^2}^2
 \le2\|H\|_{L^\infty(M)}\|f_\varepsilon\|_{L^2}
       \|df_\varepsilon\|_{L^2}.
\end{equation}
For \(0<\varepsilon\le1\), the \(L^2\)-norms of
\(f_\varepsilon\) are uniformly bounded.  Hence
\(\{f_\varepsilon\}\) is bounded in \(W^{1,2}(M)\).

It follows that \(f_\varepsilon\to f\) strongly in \(L^2\).  After passage to a
subsequence, \(f_\varepsilon\rightharpoonup f\) weakly in \(W^{1,2}\), and hence
\(df_\varepsilon\rightharpoonup df\) weakly in \(L^2\).  Moreover,
\begin{equation}\label{eq:psi-pointwise-limit}
 0\le\psi_\varepsilon\le\rho^p,
 \qquad
 \psi_\varepsilon\longrightarrow\rho^p=f^2.
\end{equation}
These bounds give dominated convergence for the zeroth-order terms.  For the
Lee-form term, equation \eqref{eq:psi-derivative} also gives
\begin{equation}\label{eq:psi-dominated-bound}
 |d\psi_\varepsilon|
 \le2u^{2p-1}|du|.
\end{equation}
For \(\rho>0\), this follows from
\[
 (\rho+\varepsilon)^{p-2}(p\rho+\varepsilon)
 \le\rho^{p-1}.
\]
At a zero of the smooth nonnegative function \(\rho\), \(d\rho=0\), so
\eqref{eq:psi-dominated-bound} holds there as well.  The function
\(t\mapsto t^{2p}\) is \(C^1\) on \([0,\infty)\) with derivative zero at the
origin.  The Lipschitz chain rule therefore gives
\(d(f^2)=2p\,u^{2p-1}du\), including almost everywhere on \(\{u=0\}\).
Since \(2p-1>0\), dominated convergence yields
\begin{equation}\label{eq:psi-L1-limit}
 d\psi_\varepsilon\longrightarrow d(f^2)
 \qquad\text{in }L^1.
\end{equation}
We now let \(\varepsilon\to0\) in \eqref{eq:regularized-integral}, using weak
lower semicontinuity for the gradient term, dominated convergence for the
zeroth-order terms, and \eqref{eq:psi-L1-limit} for the Lee-form term.  This
gives
\begin{align}
 0\ge{}&
 \int_M|df|^2\,dV_g
 +8p\kappa\int_Mf^2\,dV_g
 +\frac p2\int_M|H|^2f^2\,dV_g \notag\\
 &-p\int_M\langle\theta,d(f^2)\rangle\,dV_g.
 \label{eq:limit-integral}
\end{align}
The Sobolev chain rule gives \(d(f^2)=2f\,df\).  Completing the square in
\eqref{eq:limit-integral} and using \(|\theta|=|H|\) yields
\begin{equation}\label{eq:completed-square}
 \int_M|df-pf\theta|^2\,dV_g
 +p\int_M
 \left[
  8\kappa-\left(p-\frac12\right)|H|^2
 \right]f^2\,dV_g
 \le0.
\end{equation}
By \eqref{eq:positive-potential-choice}, both terms on the left are
nonnegative, and the second is bounded below by a positive multiple of
\(\int_Mf^2\,dV_g\).  Therefore \(f=0\), {contradicting our assumption} that
\(\alpha\) is not identically zero.

The argument in this section shows that, on a compact Hermitian surface with positive Strominger-Bismut sectional curvature, every harmonic anti-self-dual two-form vanishes, so by Hodge theory we get \(b_2^-(M)=0\). {This completes the proof of Theorem \ref{thm3} and hence, as mentioned in \S 1, that of Theorem \ref{thm2}.}


\vspace{0.2cm}

Finally, we remark that the metric in Theorem \ref{thm2} need not be  K\"ahler. In other words, a non-K\"ahler metric may have positive Strominger-Bismut sectional curvature. For instance, let $g_{FS}$ be the standard Fubini-Study metric on ${\mathbb C}{\mathbb P}^2$ and $f\in C^{\infty }({\mathbb C}{\mathbb P}^2)$ a non-constant function, then for $\varepsilon$ sufficiently small, the non-K\"ahler metric $e^{\varepsilon f}g_{FS}$ would still have positive Strominger-Bismut sectional curvature.

\vspace{0.6cm}

\noindent\textbf{Added in proof.} Very recently, S. Brendle and P. K. Hung \cite{BH} established the existence of a Riemannian metric with positive sectional curvature on $S^2\times S^2$, settling a long-lasting open problem in differential geometry. The main result of this article implies on the other hand that $S^2\times S^2$ cannot admit a Hermitian metric with positive Strominger-Bismut sectional curvature. What remains is the following:

\emph{Can $S^2\times S^2$ admit a Hermitian metric with positive Levi-Civita sectional curvature?}

\vspace{0.6cm}

\noindent\textbf{Acknowledgments.}
The third named author would like to thank Bo Yang, Xiaokui Yang, and Quanting Zhao for their interest and helpful discussions.

\vspace{0.3cm}

\noindent\textbf{Generative AI disclosure.}
The authors used OpenAI's ChatGPT as an assisting tool in the development of the paper. The authors take full responsibility for all mathematical arguments and the manuscript was entirely written by the authors.

\vspace{0.3cm}

\noindent\textbf{Declaration on competing interests.}
All authors declare that there are no competing interests for this paper.

\vspace{0.3cm}

\noindent\textbf{Statement on data availability.}
Data sharing is not applicable to this article as no datasets were generated during the current study.

\end{document}